\newtheorem{thm}{Theorem}[section]
\newtheorem{lem}[thm]{Lemma}
\newtheorem{prop}[thm]{Proposition}
\begin{document}

\title{On the gradient flows on Finsler manifolds}
\author{N. Shojaee, M. M. Rezaii\\
 Department of  Mathematics and Computer science\\ 
 Amirkabir University of Technology (Polytechnic of Tehran),\\ 
Tehran, Iran}

\maketitle

\begin{abstract}
The purpose of this article is to provide a general overview of curvature functional in Finsler geometry and use its information to introduce the gradient flow on Finsler manifolds. For this purpose, we first prove that the space of Finslerian metrics is a Riemannian manifold. Then it is given a decomposition for the tangent space of this Riemannian manifold by means of Riemannian metric and the Berger-Ebin theorem. Next, Finslerian functional is introduced and show that Akbar-Zadeh curvature functional is the example of Finslerian functional. After that, the critical points of Akbar-Zadeh functional are found in two situations. Based on the constant Indicatrix volume and restricting its variations to the point-wise conformal transformations, we prove that the critical points of functional are metrics of constant Ricci-directional curvature. Finally, the gradient flow of Akbar-Zadeh curvature functional in this special direction is introduced as a good candidate for evolving to the metric with constant second type scalar curvature and we compare this new flow with introducing Ricci flow in Finsler geometry. \\

Keywords: Gradient flow; Curvature functional; Berger-Ebin theorem. 

Subjclass[2000]: {53B40, 58B20, 58E11}

\end{abstract}

\newtheorem{mythm}{Theorem}[section]
\newtheorem{mylem}[thm]{Lemma}
\newtheorem{myprop}[thm]{Proposition}
\newtheorem{mycor}[thm]{Corollary}
\newtheorem{myconj}[thm]{Conjecture}
\theoremstyle{definition} 
\newtheorem{definition}{Definition}[section]
\theoremstyle{remark} 
\newtheorem{remark}{Remark}[section] 
\newtheorem{example}{Example}

\section{Introduction}
Nonlinear heat flows were first appeared in Riemannian geometry in 1964, when Eells and Sampson have introduced the harmonic map heat flow as the gradient flow of the energy functional $E(u)=\int_{M}|\nabla u|^2dV$ \cite{ES}. They have used this flow to deform given maps $u:M\rightarrow N$ between two manifolds into the extremal maps which are critical points of the energy functional $E(u)$ in the sense of the calculus of variation.\\
One of the fundamental problems in differential geometry is to find canonical metrics on Riemannian manifolds, that is, the metrics which are highly symmetrical, like those of constant curvature in some senses. It was first Hamilton, who used the idea of evolving an object to such an ideal state by a nonlinear heat flow and invented the Ricci flow in 1981, cf. \cite{HA}. He has proved that a Riemannian metric of strictly positive Ricci curvature on a compact 3-manifold can be deformed into a metric of positive constant curvature. Similar results for a compact $n$-manifold have been obtained by G. Huisken \cite{HU}, C. Margerin \cite{MA} and S. Nishikawa \cite{NI}.\\
The stationary metrics under the Ricci flow are Ricci flat metrics which are also the critical points of the Einstein-Hilbert functional ${\mathcal{E}}(g)=\int_MRdV$ but the Ricci flow is not exactly the gradient flow of this functional. It is just the first part of the Einstein-Hilbert functional's gradient flow, $\partial_tg_{ij}=-R_{ij}+(R/n) g_{ij}$.  If this functional is restricted to the class of conformal metrics, then it has a strictly parabolic gradient flow called the Yamabe flow. Hamilton proved that there is not any functional such that its gradient flow is exactly the Ricci flow. Perelman recently improved the Einstien-Hilbert functional and introduced ${\cal F}$-entropy functional ${\cal F}(g_{ij},f)=\int_M(|\nabla f|^2+R)e^{-f}dV$. This functional had a system of PDEs such that one of these equations was the Ricci flow \cite{M}.\\
Moreover, the gradient flows have fruitful roles in global analysis on manifolds and in different branches of applied sciences such as image processing and biological problems. Finsler geometry is a natural generalization of Riemannian geometry. Therefore, it is natural to extend gradient flow on Finsler manifolds. For the first attempt in this topic, we can mention the concept of the Ricci flow introduced by Bao \cite{B1}. He introduced $\partial_t\log F=-Ric$ as a scalar form of the Ricci flow on Finsler manifolds since it guarantees that the metric is derived from Finsler structure without needing to check integrability conditions in each step of work. Recently, Ohta and Sturm studied the heat flows on Finsler manifolds in \cite{OS}.\\
In the present work, the space of all Finsler metrics on a given manifold is studied. It is proved that this space has a Riemannian manifold structure and is represented by $\cal{M}_F$ (see Theorem (3.2)). This manifold structure is important to make sure that the solutions of heat flows come from a Finsler structure. A heat flow is placed in the tangent space of manifold ${\cal M}_F$ and if it has some solutions, then they are located in this manifold according to the concept of integral curves. So it gurantees the solutions of heat flows come from a Finsler metric no need to use the scalar form of flows.  In the forth section, by means of the metric structure on manifold ${\cal M}_F$ and the Berger-Ebin theorem, two different splits are given for the tangent space of ${\cal M_F}$ (see Theorems 4.2, 4.3), which are the natural extension of the Riemannian case, see \cite{E1, BE2, F1, S}. In this way, in the fifth section, the variation of Finsler metrics is defined a bit more complete than what is mentioned by Akbar-Zadeh in \cite{A2} (see Definition 5.2). Furthermore, the Berger-Ebin decomposition of the tangent space ${\cal M}_F$  helps us to calculate the variation of curvature functionals in different directions and find another heat flow $\partial_tg_{ij}=-H(u,u)g_{ij}$ which is a weakly parabolic equation (see Corallary 5.4). Furthermore, we define Finslerian functionals (see Definition 5.3) and we compute variations of Akbar-Zadeh functional in the point-wise conformal transformation direction and prove that the critical points of this functional are of constant Ricci-directional curvature (see Theorem 5.3).
\section{Preliminaries}
Let $(M,g)$ be a connected, compact Finsler manifold. That is, there is a function $F$ on the tangent bundle $TM$ satisfying the following conditions:
\begin{itemize}
\item $F$ is a smooth function on the entire slit tangent bundle $TM_o$.
\item $F$ is a positive homogeneous function on the second variable, $y$.
\item The matrix $(g_{ij})$, $g_{ij}(x,y)=\frac{1}{2}\frac{\partial^2F^2}{\partial y^i\partial y^j}$ is non-degenerate.
\end{itemize}
\subsection{Connections on Finsler manifold}
Geodesics of a Finsler structure $F$ are characterized locally by $\frac{d^2x^i}{dt^2}+2G^i(x,\frac{dx}{dt})=0$, where  
$G^i=\frac{1}{4}g^{ih}(\frac{\partial^2 F^2}{\partial y^h\partial x^j}y^j-\frac{\partial F^2}{\partial x^h})$ are called geodesic spray coefficients.
Let $G^i_j=\frac{\partial G^i}{\partial y^j}$ be the coefficients of a nonlinear connection on $TM$. By means of this nonlinear connection, the tangent space $TM_o$ splits into horizontal and vertical subspaces. $TTM_0$ spanned by $\{\frac{\delta}{\delta x^i},\frac{\partial}{\partial y^i}\}$, where $\frac{\delta}{\delta x^i}:=\frac{\partial}{\partial x^i}-G_i^j\frac{\partial}{\partial y^j}$ are called Berwald bases and their dual bases are denoted by $\{dx^i,\delta y^i\}$, where $\delta y^i:=dy^i+G_j^idx^j$. Furthermore, this nonlinear connection can be used to define a linear connection called the Berwald connection and its connection $1$-forms are defined locally by $\pi^i_j=G^i_{jk}dx^k$ where $G^i_{jk}=\frac{\partial G^i_j}{\partial y^k}$.
The connection $1$-forms of the Cartan connection are defined by $\tilde{\nabla}\frac{\partial}{\partial x^i}=\omega_i^j\frac{\partial}{\partial x^j}$, where $\omega_j^i=\Gamma^i_{jk}dx^k+C^i_{jk}\delta y^k$ such that
\begin{equation*}
\Gamma_{jk}^i=\frac{1}{2}g^{im}(\frac{\partial g_{mj}}{\partial x^k}+\frac{\partial g_{mk}}{\partial x^j}-\frac{\partial g_{kj}}{\partial x^m})-(C^i_{js}G^s_k+C^i_{ks}G^s_j-C_{kjs}G^{si}),
\end{equation*}
 and 
\begin{align}\label{eq11}
C^i_{jk}=\frac{1}{2}g^{im}(\frac{\partial g_{mj}}{\partial y^k}+\frac{\partial g_{mk}}{\partial y^j}-\frac{\partial g_{kj}}{\partial y^m}),
\end{align}
Hence we have $\tilde{\nabla}=\nabla+\dot{\nabla}$ where, $\nabla$ is the horizontal coeffiecients of the Cartan connection and $\dot{\nabla}$ is the vertical coeffiecients of the Finslerian(Cartan) connection. For more details of this subsection see \cite{BCS}.\\
%------------------------------------------------------------------------------------------------------------------------------------------------------------------------------------------------------------------------------------------------------------------------------------------------------------------------------
\subsection{The curvature tensors of Finsler manifold}
The hh-curvature of the Cartan and Berwald connections are denoted respectively by $R^{~i}_{j~kl}$ and $H^{~i}_{j~kl}$. They are related by \cite{A2},
$$R^{~i}_{j~kl}= H^{~i}_{j~kl}+C^{i}_{jr}R^{~r}_{o~kl}+\nabla_l\nabla_oC^{i}_{jk}-\nabla_k\nabla_o C^{i}_{jl}+\nabla_oC^{i}_{lr}\nabla_oC^{r}_{jk}-\nabla_oC^i_{kr}\nabla_oC^{r}_{jl},$$
The Ricci tensor is defined in different ways in Finsler geometry. In the present work, we consider the Akbar-Zadeh's definition of Ricci tensor given by ${\tilde H}_{ij}=1/2\frac{\partial^2}{\partial y^i\partial y^j}(H_{rs}y^ry^s)$, where $H_{ij}=g^{ks}H_{ikjs}$. The Ricci-directional curvature is defined by $H(u,u)=g^{ik}H_{ijkl}u^ju^l$ where, $u^i=\frac{y^i}{F}$. The scalar curvature of second type is defined by $\tilde{H}=g^{ij}\tilde{H}_{ij}$. According to the above formula, it is not important which connections are used for defining the Ricci tensor and the Ricci-directional curvature.
%---------------------------------------------------------------------------------------------------------------------------------------------------------------------------------------------------------------------------------------------------------------------------------------------------------------------------
\subsection{The Indicatrix bundle}
Let $x_0\in M$, define $S_{x_0}M=\{y\in T_{x_0}M|F(x_0, y)=1\}$ which is called Indicatrix at the point $x_0$. Put $SM:=\mathop{\cup}\limits_{x_0\in M} S_{x_0}M$, $SM$ is called the
Indicatrix bundle of a Finsler structure $F$. All the geometric objects on $SM$ are positive homogeneous of degree zero. If $f$ is a function on $SM$ and $\partial_\alpha$'s $(1\leq\alpha\leq n-1)$ are partial derivatives along Indicatrix then its derivative is $\partial_{\alpha} f=v^i_\alpha\frac{\partial f}{\partial y^i}$ where $\partial_\alpha=v^i_\alpha\frac{\partial }{\partial y^i}$ and $v^i_\alpha$ is the transition matrix of rank $(n-1)$. So the coefficients of the induced metric on $SM$ are $g_{\alpha\gamma}=v^i_\alpha v^j_\beta g_{ij}$ and since the vertical Liouville vector field $L$ is normal to the Indicatrix with respect to this metric, we have 
\begin{equation*}
(g_{ij})_{1\leq i,j\leq n}=
\begin{bmatrix}
(g_{\alpha \gamma})& 0\\
0 & 1 
\end{bmatrix}.
\end{equation*}
The dual bases of $\partial_\alpha$ is denoted by $\beta^\alpha$ and is defined by $\beta^\alpha=\nabla u=\omega^\alpha_n$, cf. \cite{A3}.\\
The Indicatrix bundle $SM$ is always orientable and the compactness of $M$ provided that $SM$ is compact, too. These two properties of $SM$ permit us to define integral on Finsler manifolds and a global inner 
product on $SM$. The volume element of the Indicatrix bundle is denoted by $(2n-1)$-form $\eta$, cf. \cite{A2},
\begin{align}\label{eqn 22}
\eta:=\frac{(-1)^N}{(n-1)}\phi,\quad \phi=\omega\wedge(d\omega)^{(n-1)},\quad N=\frac{n(n-1)}{2}.
\end{align}
where $\omega$ is the Hilbert form. On the tensor spaces on $SM$, The canonical (point-wise) scalar product is denoted by $<.|.>$ and the global scalar product on their sections is denoted by $(.|.)=\int_{SM}<.|.>\eta$.
The codifferential operator on the space of differentiable $1$-forms is defined on $SM$ by, cf. \cite{A2},
\begin{equation}
\delta a=-(\nabla^ja_j-a_j\nabla_0C^j),
\end{equation}
where, $a$ is a horizontal 1-form on $SM$. And
\begin{equation}
\delta b=-F(\dot{\nabla}_jb^j+b_jC^j)=-Fg^{ij}\partial_jb_i.
\end{equation}
where, $b$ is a vertical 1-form on $SM$. 
%------------------------------------------------------------------------------------------------------------------------------------------------------------------------------------------------------------------------------------------------------------------------------------------------------------------------------
\subsection{Curvature functional}
Akbar-Zadeh defined different functionals by means of different curvature tensors in \cite{A2,A1}. The more general case among them is
\begin{align}\label{eqn 19}
I(g_t)=\int_{SM}\hat{H}_t\eta_t,
\end{align}
where $\hat{H}=\tilde{H}-c(x)H(u,u)$. The critical points of this functional are called generalized Einstein metrics. More preciesly, we have the following:
\begin{definition}
A Finslerian manifold is called a generalized Einstein manifold (GEM) if the Ricci-directional curvature is independent of the direction. That is to say
$$\tilde{H}_{ij}(x,y)=C(x)g_{ij}(x,y).$$
\end{definition}
Through finding critical points of the functional $I(g_t)$, Akbar-Zadeh proved that 
\begin{align}\label{eqn 20}
C(x)=nH(u,u)=\tilde{H}.
\end{align}
 So the Ricci-directional curvature is related to the second type scalar curvature, see \cite{A2} for more details.
%----------------------------------------------------------------------------------------------------------------------------------------------
\section{The space of Finsler metrics}
The space of Riemannian metrics on a given manifold is an infinite dimensional manifold. It is easy to see this property since the Riemannian metrics space is the open and convex set of the space of all sections of $S^2T^*M$. Ebin used the manifold structure in \cite{E1} and gave a Riemannian structure to the manifold of Riemannian metrics on a compact manifold $M$. The aim of this section is to consider the geometry of the space of Finslerian metrics. Dealing with Finslerian case is not as easy as Riemannian case because of PDEs and integrability conditions for defining the Finsler metrics. The outline of the proof is to start by the generalized Lagrange metrics and restricted it to find a suitable PDE for introducing Finsler metric space.
The generalized Lagrange metric is a metric structure on $\pi^*TM$ or $VTM$ and is defined as follows:
\begin{definition}
A generalized Lagrange metric, briefly a GL-metric on an $n$-dimensional manifold $M$, is a $(0,2)$ d-type tensor field $g_{ij}(x,y)$ on $TM$ satisfying the following
\begin{itemize}
\item $g_{ij}(x,y)=g_{ji}(x,y)$, i.e. it is symmetric,
\item det$g_{ij}(x,y)\neq 0$, i.e. it is regular,
\item The quadratic form $g_{ij}(x,y)\xi^i\xi^j,\xi\in\mathbb R^n$ has a constant signature.
\end{itemize}
\end{definition}
If we only consider positive signature, then $g(x,y)$ is a Euclidean product of the vector space $\pi^*|_zTM$ for each $z=(x,y)\in U\subset TM$. So $\pi^*TM$ is a Riemann vector bundle over $TM$. A GL-metric is called a Lagrange metric, if there is a potential function $L:TM\rightarrow {\mathbb R}$ such that 
\begin{align}\label{eqn 21}
g_{ij}(x,y)=\frac{1}{2}\frac{\partial^2 L}{\partial y^i \partial y^j}(x,y),
\end{align}
are components of  a positive definite matrix. A GL-metric is reducible to a Lagrange metric if and only if the Cartan tensor (\ref{eqn11}) is symmetric in all three indices. This condition is equivalent to the integrability condition of the system (\ref{eqn 21}) i.e. $\frac{\partial g_{ij}}{\partial y^k}=\frac{\partial g_{ik}}{\partial y^j}$ is satisfied. It signifies that the equation (\ref{eq11}) is reduced to the form $C_{ijk}=\frac{1}{2}\frac{\partial g_{ij}}{\partial y^k}=\frac{1}{2}\partial_k g_{ij}$. Furthermore, the coefficients of a Finslerian metric are zero homogeneous, so they are lying on $SM$. Hence a Lagrange metric is reduced to a Finsler metric if and only if the coefficients of the metric are satisfied with a system of the linear partial differential equations, $y^k\frac{\partial g_{ij}}{\partial y^k}=0$, see \cite{BM} for more details. So the problem of introducing the space of Finsler metrics is reduced to finding the solution space of the following system:
\begin{equation}\label{eqn14}
\left\{ \begin{array}{ll}
y^i\partial_i g_{jk}=0; &  i,j,k=1,\dots ,n\\
g(\xi,\xi)> 0; & \xi\in\Gamma(\pi^*TM_0)
\end{array} \right.
\end{equation}
We note that since these equations are defined in L-metrics space so the potential function is always defined by 
$$L(x,y)=g_{ij}(x,y)y^iy^j.$$
for the solutions of (\ref{eqn14}). It means that the integrability condition is satisfied for these solutions. Now, the procedure is to define another system of equations which is equivalent to (\ref{eqn14}). 
\begin{definition}
Let $E$ and $F$ be vector bundles over the manifold $M$. A linear differential operator of order $q$ from $E$ to $F$ is a map $\phi o j^q:E\rightarrow F$ between the sets of germs of sections $E$ and $F$ where, $\phi:J^q(E)\rightarrow F$ is a vector bundle morphism and $J^q(E)$ is the jet bundle of $E$ of order $q$.
\end{definition}
A GL-metric is a field of cones on $S^2\pi^*T^*M$, that is
\begin{align}
k:TM\rightarrow S^2\pi^*T^*M\\
z\rightarrow k(z)\subset E_z\nonumber
\end{align}
where $k(z)=\{g_{ij}\in S^2\pi^*T^*M| det g_{ij}>0\}\cup\{g_{ij}\in S^2\pi^*T^*M| det g_{ij}<0\}$. So the space of GL-metrics is a symmetric $2$-forms bundle over $TM$ endowed with a field of cones which is denoted by $E:=[S^2\pi^*T^*M;K]$ cf. \cite{BE}.
Let $F$ be the subbundle of $J^1E$ which is spanned at each point $z\in TM$ by $(u^{ij},u_k^{ij},u_{\alpha}^{ij},u^{ij}_L)$ where, $L=y^k\frac{\partial}{\partial y^k}$ is the vertical Liouville vector field.
Suppose that $P:\Gamma(E)\rightarrow \Gamma(F)$ is a linear first order differential operator which is defined by $P(g):=\Phi o j^1(g)=y^k\partial_kg_{ij}$ c.f \cite{G}.
\begin{definition}
A morphism of vector bundles $\sigma(P):S^qT^*M\otimes E\rightarrow F$ which is fibered over $P:E\rightarrow F$ is called the symbol of $P$.
\end{definition}
The symbol of $P$ is defined by: 
\begin{align*}
\sigma(P):T^*(TM)\otimes E\rightarrow F\\
\sigma_t(P)=P(fg),
\end{align*}
where $t=df$. In local coordinate, we have $P(fg)=y^k\partial_k(fg_{ij})$. So by means of the integrability condition for system (\ref{eqn14}), the kernel of this symbol is the space of conformal Finsler metrics. For any $s\geq 3$, the vector space $$V_s:=(T^*TM\otimes E)\cap(S^{s-1}T^*TM\otimes ker(\sigma(P))),$$ 
is vanish. Therefore, the system $(P,E,F)$ is of finite type. So the equation $P(g)=0$ is equivalent to the closed system of PDEs of the form $\partial_k g_{ij}=\psi_k(ij)$ where, $\psi_k(ij)$ are a combination of the homogeneous functions of order $-1$ of $y^i$, $L(x,y)$, $\frac{\partial L}{\partial y^i}(x,y)$ and $\frac{\partial^2 L}{\partial y^i\partial y^j}(x,y)$.  Hence the system of equations (\ref{eqn14}) is equivalent to the following system:
\begin{equation}\label{eqn 15}
\left\{ \begin{array}{ll}
\partial_k g_{ij}=\psi_k(ij)\quad ; &  i,j,k=1,\dots ,n‎\\
g(\xi,\xi)> 0\quad\quad\quad\quad ; & \xi\in\Gamma(\pi^*TM_0).
\end{array} \right.
\end{equation}
It will thus be sufficient to prove that the system (\ref{eqn 15}) has a solution,  see \cite{Du}. Since this system is of finite type, i.e. the higher order derivatives can be written in lower order derivatives, the integrability condition is always true for this system.
\begin{prop}
The system of PDEs (\ref{eqn 15}) has a solution.
\end{prop}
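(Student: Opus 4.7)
The plan is to invoke the general existence theorem for formally integrable linear differential operators of finite type, as developed by Goldschmidt and Spencer, to conclude that \eqref{eqn 15} admits solutions, and then to exhibit an explicit global witness confirming non-emptiness.

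First, I would carefully verify the hypotheses of the finite-type existence theorem. The preceding paragraphs have established that $(P,E,F)$ is of finite type, since the prolongation spaces $V_s$ vanish for $s\geq 3$, and the first prolongation already yields a closed system expressing every derivative $\partial_k g_{ij}$ as a prescribed function $\psi_k(ij)$ of $L$, $\partial L/\partial y^i$ and $\partial^2 L/\partial y^i\partial y^j$. For finite-type operators, the vanishing of the relevant Spencer cohomology reduces formal integrability to an algebraic compatibility check, which is automatic in our setting because the right-hand side $\psi_k(ij)$ is already in closed form with no hidden obstruction. At this stage I would appeal to \cite{Du} and the standard Goldschmidt theorem to produce local solutions of $P(g)=0$ through any admissible initial datum.

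Next, I would provide an explicit global solution to confirm non-emptiness rather than leaving the conclusion as an abstract existence statement. The pull-back to $TM$ of any Riemannian metric $h_{ij}(x)$ on $M$, viewed as a $y$-independent section of $S^2\pi^*T^*M$, automatically satisfies $y^k\partial_k g_{ij}=0$ and the positivity condition $g(\xi,\xi)>0$ for every $\xi\in\Gamma(\pi^*TM_0)$. The existence of such an $h_{ij}$ on the compact manifold $M$ is guaranteed by a standard partition-of-unity construction, and so the solution set of \eqref{eqn 15} is non-empty, which is the assertion of the proposition.

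The main obstacle I anticipate is the interplay between the linear formal-integrability machinery and the open (nonlinear) positivity constraint: the Goldschmidt-type existence result is purely algebraic-analytic, whereas Finsler admissibility demands $g(\xi,\xi)>0$ fiberwise on $\pi^*TM_0$. Fortunately the positivity cone is open in each fiber of $S^2\pi^*T^*M$, so a sufficiently small perturbation of the Riemannian witness within this cone supplies non-Riemannian Finsler solutions as well, ensuring that the solution space is rich enough to serve as a basis for the manifold structure $\mathcal{M}_F$ introduced subsequently.
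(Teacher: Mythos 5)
Your argument reaches the stated conclusion, but by a route that only partially overlaps with the paper's. The paper proves the proposition by turning the closed system \eqref{eqn 15} into a Pfaffian system, applying the Frobenius theorem to the vector fields $X_k=\partial_k+\psi_k(ij)\frac{\partial}{\partial g_{ij}}$, reducing the bracket condition $[X_k,X_l]=0$ to the algebraic compatibility equations \eqref{eqn 18}, and then invoking Theorem 2.1 of \cite{Du} for the resulting finite set of algebraic constraints. The first half of your proposal (Goldschmidt--Spencer formal integrability for finite-type operators) is essentially a jet-theoretic repackaging of that same idea, but it is the weaker half: asserting that the compatibility check is ``automatic'' because $\psi_k(ij)$ is ``in closed form with no hidden obstruction'' skips exactly the step the paper devotes its proof to, namely writing down and discharging the quadratic bracket conditions \eqref{eqn 18}. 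The second half of your proposal is genuinely different and, for the literal statement, more elementary and self-contained: a $y$-independent positive-definite section $h_{ij}(x)$ satisfies $y^k\partial_k g_{ij}=0$, the positivity constraint, and the integrability condition $\partial_j h_{ik}=\partial_k h_{ij}$ trivially, and such a section exists by a partition of unity; since the paper asserts that \eqref{eqn 15} is equivalent to \eqref{eqn14}, this Riemannian witness already settles non-emptiness without any integrability machinery. What the paper's approach buys is a description of the full solution space, which is what Theorem 3.2 actually needs, whereas your witness only shows the space is non-empty; and your closing perturbation remark needs care, since a small perturbation must remain not only in the open positivity cone but also in the solution space of the linear system (for instance a Randers-type perturbation $F=\sqrt{h_{ij}y^iy^j}+\epsilon b_iy^i$), a constraint your phrasing glosses over.
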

\begin{proof}
The $1$-forms associated with this linear system are $d^vg_{ij}-\psi_k(ij)dy^k=0$, so the annihilator of these $1$-forms are $X_k=\partial_k+\psi_k(ij)\frac{\partial}{\partial g_{ij}}$. According to the Frobenius theorem, this system has a solution if and only if $[X_k,X_l]=0$.
This condition is equivalent: 
\begin{equation}
\partial_k\psi_l(ij)-\partial_l\psi_k(ij)+\psi_l(mn)\frac{\partial \psi_k(ij)}{\partial g_{mn}}-\psi_k(mn)\frac{\partial \psi_l(ij)}{\partial g_{mn}}=0.
\end{equation}
By integrability condition, we have $\partial_k\psi_l(ij)=\partial_l\psi_k(ij)$, so this equation is reduced to
\begin{equation}\label{eqn 18}
\psi_l(mn)\frac{\partial \psi_k(ij)}{\partial g_{mn}}-\psi_k(mn)\frac{\partial \psi_l(ij)}{\partial g_{mn}}=0.
\end{equation}
Represent the set of algeraic equations (\ref{eqn 18}) by $F_K(z,g)=0$, where $K=1,\dots ,n^2$. So we have maximum $n^2$ independent and it yields that the system of equations (\ref{eqn 15}) has a solution according to Theorem (2.1) of \cite{Du}.
\end{proof}
\begin{thm}
The space of all Finsler metrics on a compact manifold $M$ is a Riemannian manifold.
\end{thm}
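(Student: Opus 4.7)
The plan is to adapt Ebin's construction of the Riemannian structure on the space of Riemannian metrics to the Finslerian setting, with the added burden that Finsler metrics are cut out of the generalized Lagrange metrics by the PDE system (3.5). First I would identify $\mathcal{M}_F$ with the set of sections $g\in\Gamma(S^2\pi^*T^*M)$ which (i) solve $y^k\partial_k g_{ij}=0$ together with the symmetry of the Cartan tensor (the two conditions that reduce a GL-metric to a Finsler metric), and (ii) are positive definite as quadratic forms on $\pi^*TM_0$. Condition (ii) is open in any reasonable $C^k$ or Sobolev topology, and condition (i) cuts out a closed linear subspace $\mathcal{L}$ of the ambient Fréchet space of sections; Proposition~3.1 ensures $\mathcal{L}\cap\{g>0\}$ is non-empty.

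Second, I would promote $\mathcal{M}_F$ to a smooth infinite-dimensional manifold. Since $\mathcal{L}$ is a closed linear subspace and positivity is an open condition, the obvious atlas has one chart per $g\in\mathcal{M}_F$, namely the translation $h\mapsto g+h$ from a neighborhood of $0\in\mathcal{L}$ onto a neighborhood of $g$. All overlap maps are affine, hence smooth, and the tangent space at any $g$ is canonically
\begin{equation*}
T_g\mathcal{M}_F\;\cong\;\mathcal{L}\;=\;\{h\in\Gamma(S^2\pi^*T^*M)\,:\,y^k\partial_k h_{ij}=0,\ h_{ij}=h_{ji}\},
\end{equation*}
that is, the space of symmetric $(0,2)$ d-tensors on $TM_0$ which are zero-homogeneous in $y$ and therefore descend to $SM$.

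Third, I would put the weak Riemannian metric on $\mathcal{M}_F$ by setting, for $h,k\in T_g\mathcal{M}_F$,
\begin{equation*}
(h,k)_g\;:=\;\int_{SM}\langle h,k\rangle_g\,\eta,\qquad \langle h,k\rangle_g=g^{ip}g^{jq}h_{ij}k_{pq},
\end{equation*}
where $\eta$ is the volume form (2.2). Compactness of $M$ (hence of $SM$) makes the integral finite; positive definiteness of the pointwise pairing $\langle\cdot,\cdot\rangle_g$ follows from positive definiteness of $g$; and smooth dependence on $g$ follows from smoothness of $g\mapsto g^{ij}$ and $g\mapsto\eta$ on the open set where $g$ is positive definite. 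This furnishes the required Riemannian structure.

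The main obstacle I expect is in the first two steps, not in the third: in the Riemannian case the manifold structure is essentially free because metrics form an open convex cone in $\Gamma(S^2T^*M)$, but in the Finsler case the homogeneity PDE and the symmetry of $C_{ijk}$ make the ambient space of admissible tensors a \emph{linear subspace} rather than the whole section space. Justifying that this subspace is closed and splits, so that $\mathcal{M}_F$ is genuinely an open submanifold and not merely an abstract set, relies on the finite-type argument already invoked to derive (3.5), which closes up the differential constraint and makes $\mathcal{L}$ a closed subspace compatible with the Fréchet topology; once this is in place, the remainder of the construction is formal.
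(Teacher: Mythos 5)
Your proposal is correct and lands on the same Riemannian structure as the paper (the weak $L^2$ pairing $\int_{SM}\langle h,k\rangle_g\,\eta$ over the indicatrix bundle), but you reach the manifold structure by a genuinely different and more explicit route. The paper leans on the finite-type reduction of the system $y^k\partial_k g_{ij}=0$ to the closed form $\partial_k g_{ij}=\psi_k(ij)$ and on Proposition~3.1 (Frobenius integrability) to assert that the solution space is an infinite-dimensional manifold, essentially by putting solutions $g$ in correspondence with degree-two homogeneous potentials $F$; it does not exhibit charts. You instead observe that the two differential constraints cutting Finsler metrics out of the GL-metrics --- zero-homogeneity in $y$ and total symmetry of the Cartan tensor --- are \emph{linear} in $g$, so they define a closed linear subspace $\mathcal{L}$ of the section space, while positive definiteness is open and convex; hence $\mathcal{M}_F$ is an open convex subset of $\mathcal{L}$ with affine charts and canonical tangent space $\mathcal{L}$. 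This is cleaner and closer in spirit to Ebin's original argument, and it sidesteps the paper's somewhat informal step "the solution space is an infinite dimensional manifold"; what it gives up is the explicit link to the potential function $F$ and to Proposition~3.1, which the paper uses to guarantee the space is nonempty and to keep track of integrability. One small inconsistency to fix: in your displayed formula for $T_g\mathcal{M}_F\cong\mathcal{L}$ you list only $y^k\partial_k h_{ij}=0$ and $h_{ij}=h_{ji}$, dropping the condition $\partial_k h_{ij}=\partial_j h_{ik}$ that you correctly imposed when defining $\mathcal{L}$ in words; the paper's tangent space $\{h\in S^2(\pi^*_sT^*M)\,|\,\partial_j h_{ik}=\partial_k h_{ij}\}$ retains it, and it is needed for tangent vectors to be genuine variations of Finsler (rather than merely generalized Lagrange) metrics.
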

\begin{proof}
Let $F$ be a solution of  (\ref{eqn 18}) so $F$ is a homogeneous function of order $2$ on $TM$. It means that the solution space of (\ref{eqn 18}) is an infinite dimensional manifold. Suppose $g$ is a solution of (\ref{eqn 15}), so $g\in \Gamma(E)$ such that it is zero homogeneous and satisfies in the integrability condition i.e. $g_{ij}=1/2\frac{\partial^2F}{\partial y^i\partial y^j}$. So the solution space of (\ref{eqn 15}) is an infinite dimensional manifold, too. This solution space is represented by ${\cal M}_F$. 
For every $g\in {\cal M}_F$, the tangent space of this manifold is the space of all symmetric $2$-forms which are positive homogeneous of degree zero and symmetric in all three indices i.e.
$$T_g {\cal M}_F=\{h\in S^2(\pi^*_sT^*M)| \partial_jh_{ik}=\partial_kh_{ij}\},$$
Define the global inner product on ${\cal M}_F$ by 
\begin{align}\label{eqn10}
(a,b)_g:=\int_{SM}<a,b>\eta,
\end{align}
where $a,b\in T_g {\cal M}_F$. The local inner product is defined by $<a,b>:=g^{-1}ag^{-1}b$ and we suppose that $a$ and $b$ are square integrable. This inner product smoothly depends on $g$. Therefore, the pair $({\cal M}_F, (.|.))$ is an infinite dimensional Riemannian manifold.
\end{proof}
%-------------------------------------------------------------------------------------------
\section{Different decompositions of the tangent space of ${\cal M}_F$}
It is well known that $\pi^*TM$ is isomorphic to $VTM$. Let us consider a section $s:M\rightarrow TM$. The pullback bundle $s^*VTM$ is a vector bundle over $M$ and for all $x\in M$ there is an isomorphism $\Pi_x:(VTM)_{s(x)}\rightarrow(s^*VTM)_x\cong (s^*\pi^*TM)_x$. We use this isomorphism frequently without notification in this work
. Consider a vector field $V\in\Gamma(TM)$ and denote by $\eta_t$ the $1$-parameter local flow of $V$. 
Let $\tilde{\eta}$ be the natural extension of $\eta$ on $TM$ defined by $\tilde{\eta}_t:(x^i,y^i)\rightarrow (x^i+tv^i, y^i+ty^m\frac{\partial v^i}{\partial x^m})$.
Clearly, $\hat{V}:=\frac{d}{dt}|_{t=0}\tilde{\eta}_t$ is the complete lift of the vector field $V$ on $TM$.\\
Let $X=X^i\frac{\partial}{\partial x^i}$ be a section of $\pi_s^*TM$. Consider the canonical linear mapping $\varrho:T_zTM\rightarrow \pi_s^*T_xM$ which is defined by $\varrho_z(\frac{\delta}{\delta x^i})=\frac{\partial}{\partial x^i}|_x$ and $\varrho_z(\frac{\partial}{\partial y^i})=0$ in local coordinates. Suppose $\hat X,\hat Y$ and $\hat Z$ are sections of $TTM$ so by using the Lie derivative and torsion definitions and the properties of Cartan connection we obtain:
\begin{align*}
L_{\hat X}g(\varrho\hat Y,\varrho\hat Z)&=L_{\hat X}g(Y,Z)\\
&=g(symm(\nabla X)\hat Y,Z)+g(Y,symm(\dot{\nabla} X)\hat Z)\\
&+2g(T(X,\dot{Z}),Y)+g(T(\dot{X},Z),Y)+g(T(\dot X,Y),Z),
\end{align*}
where $\dot{X}:=\nabla_{\hat{X}}L$ and $g(symm(\nabla X)\hat Y,Z):=g(\nabla_{H\hat Y} X,Z)+g(Y,\nabla_{H\hat Z} X)$. It is defined similarly for vertical connection.\\
Now, Let $\hat X$ be the complete lift of a vector field $X$ on $M$. Replacing this vector field in the above Lie derivative equation, and using $y^m\frac{\partial X^i}{\partial x^m}=y^m\frac{\delta X^i}{\delta x^m}$, we get
\begin{align*}
\nabla_{(X^iG^l_i+y^i\frac{\partial X^l}{\partial x^i})\frac{\partial}{\partial y^l}}\frac{\partial}{\partial x^k}&=(X^iG^l_i+y^i\frac{\partial X^l}{\partial x^i})C^m_{~lk}\frac{\partial}{\partial x^m}\\
&=(y^m\frac{\delta X^l}{\delta x^m}+y^mX^rF^l_{rm})C^m_{~lk}\frac{\partial}{\partial x^m}\\
&=y^i\nabla_iX^lC^m_{~lk}\frac{\partial}{\partial x^m}.
\end{align*}
So in local coordinate, we deduce that 
\begin{align}
L_{\hat X}g(\varrho\hat Y,\varrho\hat Z)=\nabla_iX_j+\nabla_jX_i+2y^m\nabla_mX^kC_{kij}.
\end{align}
By means of the global inner product (\ref{eqn10}), we define the adjoint of this operator.
\begin{lem}
Let $(M,g)$ be a compact Finslerian manifold and $h$ an arbitrary symmetric $2$-form in $S^2\pi_s^*T^*M$. Then the adjoint of Lie derivative of $h$ in local coordinates is given by
\begin{align}
\delta h=-(\nabla^ih_{ik}-h_{kj}\nabla_0C^j+\dot{C}_{kij}h^{ij}+C_{kij}\nabla_oh^{ij})\label{div},
\end{align}
\end{lem}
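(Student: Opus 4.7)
The plan is to realize $\delta$ as the $L^2$-adjoint of the first-order operator $X \mapsto L_{\hat X} g$ with respect to the global product $(\cdot\mid\cdot)=\int_{SM}\langle\cdot,\cdot\rangle\,\eta$, and then read off the formula by integrating by parts in
\[
(L_{\hat X} g \mid h) = (X \mid \delta h).
\]
Using the local expression for $L_{\hat X}g$ derived just above the lemma together with the symmetry of $h$, the left-hand side becomes
\[
2 \int_{SM} (\nabla_i X_j)\, h^{ij}\, \eta \;+\; 2\int_{SM} (\nabla_0 X^k)\, C_{kij} h^{ij}\, \eta,
\]
where $\nabla_0 = y^m \nabla_m$. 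The task is to move every derivative from $X$ onto $h$.

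For the horizontal piece, I would apply the Leibniz rule $\nabla_i(X_j h^{ij}) = (\nabla_i X_j) h^{ij} + X_j \nabla_i h^{ij}$ together with the Finslerian divergence identity implied by the codifferential formula on horizontal 1-forms: since $\int_{SM}\delta a\,\eta = 0$ for any such $a$, one obtains
\[
\int_{SM} \nabla^i V_i\, \eta = \int_{SM} V_i \nabla_0 C^i\, \eta.
\]
Using $\nabla g = 0$ to raise and lower indices freely, the first integral collapses to $\int_{SM} X^k\bigl(h_{kj}\nabla_0 C^j - \nabla^j h_{jk}\bigr)\eta$, producing two of the four terms in the claim.

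For the $\nabla_0$ piece, the scalar $f = X^k C_{kij} h^{ij}$ satisfies
\[
\nabla_0 f = (\nabla_0 X^k)\, C_{kij} h^{ij} + X^k(\nabla_0 C_{kij})\, h^{ij} + X^k C_{kij}\,\nabla_0 h^{ij},
\]
and the vanishing of $\int_{SM}\nabla_0 f\,\eta$ — obtained by applying the divergence identity above to $a_i = y_i f$ and noting that $\nabla_k y^j = 0$ and $y^j C_j = 0$ annihilate both correction terms — transfers the derivative to produce $-\int_{SM} X^k\bigl(\dot C_{kij} h^{ij} + C_{kij}\nabla_0 h^{ij}\bigr)\eta$, with $\dot C_{kij} := \nabla_0 C_{kij}$. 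Summing the two contributions and comparing with $(X\mid\delta h) = \int_{SM} X^k(\delta h)_k\,\eta$ yields the stated formula.

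The main obstacle is justifying these two anisotropic integration-by-parts steps cleanly. The codifferential on $SM$ carries the nonstandard correction $\nabla_0 C^i$ precisely because the volume form $\eta$ is not horizontally parallel under the Cartan connection, and one must carry this correction through the horizontal step while verifying that the analogous correction in the $\nabla_0$ step drops out identically — this hinges on the horizontal parallelism $\nabla y = 0$ and the Finslerian identity $y^j C_{jik} = 0$. The secondary bookkeeping of converting between $X_k, X^k$ and between $h_{ij}, h^{ij}$ is harmless because $\nabla g = 0$ and $h$ is symmetric in all three indices.
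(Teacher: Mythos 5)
Your proposal follows essentially the same route as the paper: write $L_{\hat X}g$ in local coordinates as $\nabla_iX_j+\nabla_jX_i+2\nabla_0X^kC_{kij}$, pair with the symmetric form $h$, and integrate by parts on $SM$ using the codifferential identities $\int_{SM}\nabla^iV_i\,\eta=\int_{SM}V_i\nabla_0C^i\,\eta$ and $\int_{SM}\nabla_0f\,\eta=0$ (your justification of the latter via $a_i=y_if$, $\nabla y=0$ and $y^iC_{ijk}=0$ is correct and in fact more explicit than the paper's one-line computation). The only discrepancy is normalization: the paper defines $\delta h$ as the adjoint of $\tfrac{1}{2}L_{\hat X}g$ rather than of $L_{\hat X}g$, so with your pairing $(L_{\hat X}g\mid h)=(X\mid\delta h)$ you would land on twice the stated expression; inserting the paper's factor $\tfrac{1}{2}$ reconciles the two.
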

\begin{proof}
\begin{align*}
\int_{SM}\frac{1}{2}(L_{\hat X}g,h)\eta&=\frac{1}{2}\int_{SM}(\nabla_iX_j+\nabla_jX_i+2y^m\nabla_mX^kC_{ijk})h^{ij}\eta\\
&=\int_{SM}\nabla_iX_jh^{ij}\eta+\int_{SM}y^m\nabla_mX^kC_{kij}h^{ij}\eta\\
&=\int_{SM}(h_{ik}\nabla_0C^i-\nabla^ih_{ij}-(\nabla_0C_{ijk})h^{ij}-C_{ijk}\nabla_0h^{ij})X^k\eta\\
&=-\int_{SM}(\nabla^ih_{ik}-h_{ik}\nabla_0C^i+\dot{C}_{kij}h^{ij}+C_{kij}\nabla_oh^{ij})X^k\eta\\
&=\int_{SM}(X,\delta h)\eta.
\end{align*}
\end{proof}
\begin{thm}
The Berger-Ebin decomposition of $T_g{\cal M}_F\subset S^2\pi^*_sT^*M$ is $T_g{\cal M}_F=\{h|h=L_{\hat X}g\}\oplus S^T$ where $S^T:=\{h|\delta_gh=0\}$.
\end{thm}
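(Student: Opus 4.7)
The plan is to establish the decomposition in two stages: orthogonality of the two summands, and surjectivity of their sum onto $T_g\mathcal{M}_F$. For orthogonality, suppose $h = L_{\hat X}g$ lies in the first summand and $k \in S^T$. Applying the adjoint identity proved in Lemma 4.1 gives
\[
(h, k)_g = (L_{\hat X}g, k)_g = 2(X, \delta k)_g = 0,
\]
since $\delta k = 0$ by definition of $S^T$. Hence the two summands meet only at $0$ and are $L^2$-orthogonal in the global inner product on $\mathcal{M}_F$.

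For surjectivity I would follow the classical Berger-Ebin template adapted to the Finsler setting. Introduce the linear operator $\alpha:\Gamma(TM)\to T_g\mathcal{M}_F$ defined by $\alpha(X) = L_{\hat X}g$, whose formal adjoint, by Lemma 4.1, is $\alpha^*(h) = 2\delta h$. Set $\square := \alpha^*\circ\alpha$, a second-order linear differential operator on $\Gamma(TM)$. Given an arbitrary $h\in T_g\mathcal{M}_F$, decomposing $h = L_{\hat X}g + k$ with $k\in S^T$ is equivalent to solving $\square X = 2\delta h$, for then $k := h - L_{\hat X}g$ automatically satisfies $\delta k = 0$. Thus everything reduces to solvability of this PDE. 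One verifies that $\square$ is elliptic by computing its principal symbol, whose leading behaviour is governed by a horizontal Laplacian $-\nabla^i\nabla_i X_k$, with the extra Finslerian contributions acting as lower-order perturbations. Since $SM$ is compact, the Fredholm alternative applies: $\square X = f$ is solvable if and only if $f\perp\ker\square$. Self-adjointness of $\square$ gives $\ker\square = \ker\alpha$ (the Finslerian Killing-type fields), and for any $X_0\in\ker\alpha$,
\[
(2\delta h, X_0)_g = (\alpha^*h, X_0)_g = (h, \alpha X_0)_g = (h, 0)_g = 0.
\]
Hence $2\delta h$ is automatically orthogonal to $\ker\square$, so a solution $X$ exists, completing the decomposition.

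The principal technical obstacle is the rigorous verification that $\square$ is elliptic in the Finslerian framework. The formula for $\delta h$ from Lemma 4.1 contains terms involving the Cartan tensor $C_{kij}$, its vertical derivative $\dot C_{kij}$, and the mean Cartan contribution $\nabla_0 C^j$; one must carefully isolate which contributions to $\delta\alpha(X)$ are genuinely second-order in $X$ and check that the resulting symbol, viewed as a bundle map $T^*M\otimes TM\to TM$ over $SM$, is positive-definite for every nonzero cotangent direction. Once this symbol computation is carried out, the Hodge-theoretic half of the argument is standard and yields the asserted splitting $T_g\mathcal{M}_F = \{h\mid h = L_{\hat X}g\}\oplus S^T$.
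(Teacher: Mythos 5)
Your argument is, in substance, the proof of the Berger--Ebin splitting theorem written out by hand, whereas the paper's own proof goes the other way around: it packages the operator as $\tau_g(X)=L_{\hat X}g$ with adjoint $\tau_g^*(h)=-\sharp\delta_g h$ (Lemma 4.1), computes the symbol $\sigma_t(\tau_g)X=t\otimes X_\sharp+X_\sharp\otimes t$, observes that it is injective, and then invokes the Berger--Ebin theorem to conclude $T_g{\cal M}_F=\mathrm{Im}\,\tau_g\oplus\ker\tau_g^*$. Your orthogonality step and your Fredholm-alternative step (solving $\alpha^*\alpha X=2\delta h$, with $\ker(\alpha^*\alpha)=\ker\alpha$ by self-adjointness and the compatibility $(2\delta h,X_0)=(h,\alpha X_0)=0$) are exactly the content of that cited theorem, and both are correct as stated, including the factor of $2$ in $\alpha^*=2\delta$ coming from Lemma 4.1. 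The one point you explicitly defer --- ellipticity of $\square=\alpha^*\alpha$ --- is precisely the single computation the paper does perform, since ellipticity of $\alpha^*\alpha$ is equivalent to injectivity of the symbol of $\alpha$. That check is short: from $L_{\hat X}g=\nabla_iX_j+\nabla_jX_i+2y^m\nabla_mX^kC_{kij}$ the symbol in a covector direction $\xi$ is $\xi_iX_j+\xi_jX_i+2(y^m\xi_m)X^kC_{kij}$; contracting with $y^j$ kills the Cartan term (since $C_{kij}y^j=0$) and forces $X_0=0$ and then $X=0$ when $\xi_0\neq 0$, while for $\xi_0=0$ the Cartan term drops out entirely and one is left with the Riemannian symbol $\xi\otimes X_\flat+X_\flat\otimes\xi$, which is injective. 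So your "principal technical obstacle" is genuinely fillable in a few lines and, once filled, your proof and the paper's coincide; neither version, incidentally, addresses the subtlety that $\tau_g$ maps sections over $M$ to sections over $SM$, so the adjoint really involves a fibre integration over the indicatrix before the standard elliptic machinery on a compact base applies.
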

\begin{proof}
Define the differential operator $\tau_g$ for every $g\in\cal{M}_F$ by 
\begin{align*}
\tau_g:\Gamma(TM)\rightarrow T_g\cal{M}_F\\
\tau_g(X):=L_{\hat X}g,
\end{align*}
where $\hat{X}$ is the complete lift of $X$. The adjoint of this operator is denoted by $\tau^*$ and defined as follows:
\begin{align*}
\tau^*_g:T_g{\cal M}_F\rightarrow \Gamma (TM)\\
\tau^*_g(h)=-\sharp\delta_gh.
\end{align*}
For an arbitrary vertical 1-form $t$ on $SM$, the symbol of $\tau$ is defined by:
$$\sigma_t(\tau)=t\otimes X_{\sharp}+X_{\sharp}\otimes t,$$
It is injective so the Berger-Ebin decomposition of $T_g\cal{M}_F$ is as follows:
\begin{align}\label{eqn11}
T_g{\cal M}_F=Im\tau_g\oplus ker\tau^*_g,
\end{align}
where $Im\tau_g=\{h|h=L_{\hat X}g\}$ and $ker\tau^*_g=\{h|\delta_gh=0\}$.
\end{proof}
\begin{remark}
By means of decomposition (\ref{eqn11}), every $h$ in $T_g{\cal M}_F$ is decomposed as $h=h_0+L_{\hat{X}}g$. So this decomposition is unique up to the Finslerian Killing vector fields.
\end{remark}
The point-wise conformal deformation of a Finslerian metric $g$ is defined $\tilde{g}(x,y)=f(x)g(x,y)$ where, $f$ is a smooth positive function on $M$, \cite{Kn}. Since there is a one to one correspondence between the space of positive functions and space of exponential functions
by $f\rightarrow e^f$, we can write $\tilde{g}=e^fg$. Let ${\cal P}$ be the product group of positive functions on $M$ that acts on ${\cal M}_F$ as follows:
\begin{align*}
A:{\cal P}\times{\cal M}_F\rightarrow {\cal M}_F\\
A(f,g):=fg,
\end{align*}
This action is free and smooth. The orbit of this action at $g\in{\cal M}_F$ is defined by $A_g=\{fg|f\in{\cal P}\}$ which is a submanifold of ${\cal M}_F$ \cite{S}. The tangent space of this submanifold at $g$ is defined by ${\cal F}g=\{h=kg|k\in C^{\infty}(M)\}$ which is a subbundle of $S^2\pi_s^*T^*M$ at each point $g\in {\cal M}_F$. The orthogonal subspace of ${\cal F}g$ with respect to the global inner product is $S^T:=\{h\in S^2\pi^*_sT^*M|\int_{SM}kgh\eta=0\}=\{h\in S^2\pi^*_sT^*M|tr(h)=0\}$. On the other hand, by means of  the variation of volume forms \cite{A2}, $tr(h)=0$ if and only if $SM$ has constant volume. So the orthogonal space of ${\cal F}g$ is the space of $2$-forms which preserve volume $SM$ through metric variations. Thus, there is a point-wise decomposition like 
\begin{align}
T_g{\cal M}_F={\cal F}g\oplus S^T.\label{1}
\end{align}
Let ${\cal D}$ be the group of infinitesimal diffeomorphism on $M$ and ${\cal P}$ be a $1$-parameter group of positive function on $M$. Put ${\cal C}={\cal D}\times{\cal P}$ which is a semi-direct group with the following action:
\begin{align*}
(\eta_1,f_1).(\eta_2,f_2)=(\eta_1 o \eta_2,f_2(f_1 o \eta_2)),
\end{align*}
This group acts on ${\cal M}_F$ by function $\tilde{A}$ as follows:
\begin{align*}
{\tilde A}:{\cal C}\times{\cal M}_F\rightarrow {\cal M}_F\\
\tilde{A}((\eta, f),g)=f(\tilde{\eta}^*g),
\end{align*}
 The orbit of ${\tilde A}$ passing through $g\in\cal{M}_F$ is
\begin{align*}
{\tilde A}_g:{\cal C}\rightarrow {\cal M}_F\\
{\tilde A}_g(\eta,g)=f(\tilde{\eta}^*g),
\end{align*}
which is a submanifold of ${\cal M}_F$ \cite{S}.
\begin{thm}
The York decomposition of ${\cal B}\subset T_g{\cal M}_F$ is ${\cal B}={\cal F}g\oplus S^{TT}\oplus (S^T\cap Im\tau_g)$, where ${\cal B}$ is defined as the solution space of the system $\frac{\partial h^i_j}{\partial y^k}=0$.
\end{thm}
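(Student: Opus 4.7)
The strategy is to iterate the two decompositions already at hand—the conformal/traceless splitting (\ref{1}) and the Berger-Ebin splitting of Theorem 4.2—restricted to the subspace ${\cal B}$. Throughout, $S^{TT}$ is to be understood as $S^{T}\cap\ker\tau_{g}^{*}$, i.e.\ the transverse-traceless tensors that are simultaneously traceless (in the sense of (\ref{1})) and divergence-free (in the sense of Theorem 4.2).

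First I would show that the splitting (\ref{1}) preserves ${\cal B}$. For $h\in{\cal B}$, set $k:=\tfrac{1}{n}\,\mathrm{tr}_{g}h$ and $\tilde{h}:=h-kg$. Because $h^{i}_{j}$ is $y$-independent and $g^{ij}g_{ij}=n$, the trace $k=h^{i}_{i}/n$ depends only on $x$, so $(kg)^{i}_{j}=k\delta^{i}_{j}$ is again $y$-independent; hence both $kg$ and $\tilde{h}$ lie in ${\cal B}$. This isolates the ${\cal F}g$-summand.

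Next I would apply Theorem 4.2 to $\tilde{h}\in S^{T}\cap{\cal B}$ to write $\tilde{h}=L_{\hat X}g+h_{0}$ with $\delta_{g}h_{0}=0$. The obstacle is that $L_{\hat X}g$ need not itself be traceless: taking the $g$-trace of the formula from Section 4 yields $g^{ij}(L_{\hat X}g)_{ij}=2\nabla^{i}X_{i}+2y^{m}\nabla_{m}X^{k}(g^{ij}C_{kij})$, which must be forced to vanish for the decomposition to respect $S^{T}$. One imposes a gauge condition on $X$ to kill this trace; within ${\cal B}$ the problem reduces to a first-order elliptic equation on $M$ analogous to the usual conformal-Killing equation, solvable up to Finslerian conformal Killing fields on the compact base $M$, and any residual trace is absorbed into $k$. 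The outcome is $L_{\hat X}g\in S^{T}\cap \mathrm{Im}\,\tau_{g}$ and $h_{0}\in S^{TT}$.

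Finally I would verify pairwise $L^{2}$-orthogonality with respect to the global inner product (\ref{eqn10}). The relation ${\cal F}g\perp S^{T}$ is immediate from the definition given in (\ref{1}). For the remaining pair, Lemma 4.1 gives $(L_{\hat X}g\mid h_{0})=2(X\mid\delta_{g}h_{0})=0$ whenever $\delta_{g}h_{0}=0$. These orthogonalities force uniqueness of the three-way splitting, which yields the direct-sum statement. The principal difficulty is the second step: producing a gauge that keeps both pieces of $\tilde{h}$ inside ${\cal B}$—precisely the reason the theorem is formulated on ${\cal B}$, where the underlying PDE collapses to a standard elliptic equation on the base rather than a genuinely Finslerian problem on $SM$.
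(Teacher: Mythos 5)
Your overall strategy --- first split off the conformal part via the point-wise decomposition $T_g\mathcal{M}_F=\mathcal{F}g\oplus S^{T}$, then apply the Berger--Ebin splitting of Theorem 4.2 to the traceless remainder --- is genuinely different from the paper's, and it has a real gap at exactly the step you yourself flag as ``the principal difficulty.'' After writing $\tilde h=L_{\hat X}g+h_0$ with $\delta_g h_0=0$, you need $L_{\hat X}g$ to land in $S^{T}$. Its trace is $2\nabla^iX_i+2y^m\nabla_mX^kC_k$ with $C_k=g^{ij}C_{kij}$ the Cartan torsion trace, and this is a function on $SM$ that genuinely depends on $y$; so even after a gauge change it cannot simply be ``absorbed into $k$,'' because the summand $\mathcal{F}g=\{k(x)g\}$ only accepts traces depending on $x$ alone, and membership of $h$ in $\mathcal{B}$ does not force $L_{\hat X}g$ or its trace to be $y$-independent. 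Moreover, replacing $X$ by a gauged $X'$ changes the complementary piece to $h_0'=\tilde h-L_{\hat X'}g$, and you give no argument that $\delta_g h_0'=0$ survives the regauging. To make your route work you would have to establish a Berger--Ebin splitting for the conformal Killing operator $X\mapsto L_{\hat X}g-\tfrac1n\,\mathrm{tr}(L_{\hat X}g)\,g$ (injectivity of its symbol, computation of its adjoint, identification of its cokernel), none of which is carried out; asserting that ``the problem reduces to a first-order elliptic equation solvable up to conformal Killing fields'' names the needed lemma rather than proving it.

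The paper sidesteps this issue entirely by applying the splitting theorem once to the \emph{combined} operator $\tau_g(X,k)=L_{\hat X}g+kg$ attached to the semi-direct product $\mathcal{C}=\mathcal{D}\times\mathcal{P}$: its adjoint is $h\mapsto(\sharp\,\mathrm{div}\,h,\;\mathrm{tr}(h))$, which is well defined on $\mathcal{B}$ precisely because there $\mathrm{tr}(h)$ depends only on $x$; its symbol $\sigma_t(\tau_g)(X,f)=fg+t\otimes X_\sharp+X_\sharp\otimes t$ is injective; and Berger--Ebin then gives $T_g\mathcal{M}_F=S^{TT}\oplus\mathrm{Im}\,\tau_g$ in one step, with $S^{TT}$ simultaneously divergence-free and trace-free by construction, so no gauge fixing is ever needed. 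The three-way statement follows by intersecting $\mathrm{Im}\,\tau_g$ (which contains $\mathcal{F}g=\tau_g(\{0\}\times C^\infty(M))$) with the point-wise conformal decomposition. To repair your argument, either prove the Finslerian conformal-Killing splitting described above or switch to the combined operator as the paper does.
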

\begin{proof}
 Define $\tau_g:=d{\tilde A}_g|_{(e,1)}$ as follows:
\begin{align*}
\tau_g:\Gamma(TM)\times C^\infty(M)\rightarrow T_g{\cal M}_F\\
\tau_g(X,k)=L_{\hat X}g+kg,
\end{align*}
The adjoint of $\tau_g$ is denoted by $\tau^*_g$ and defined by:
\begin{align*}
\tau^*_g:T_g{\cal M}_F\rightarrow\Gamma(TM)\times C^\infty(M)\\
h\rightarrow(\sharp div h,tr(h)),
\end{align*}
The condition $\frac{\partial h^i_j}{\partial y^k}=0$ leads to the function $tr(h)$ is just function of $x$. So $\tau^*$ is well-defined.
The kernel of this map is $S^{TT}=\{h\in T_g{\cal M}_F| div h=0, tr(h)=0\}$, and since the symbol of the map $\tau_g$ i.e. $\sigma_t(\tau_g)(X,f)=fg+t\otimes  X_{\sharp}+X_{\sharp}\otimes t$ where, $t$ is an arbitrary vertical $1$-form on $SM$ is injective so the Berger-Ebin decomposition is
\begin{align*}
T_g{\cal M}_F=S^{TT}\oplus Im\tau_g,
\end{align*}
By corresponding this decomposition with point-wise decomposition (\ref{1}), we get
\begin{align}\label{eq9}
T_g{\cal M}_F={\cal F}g\oplus S^{TT}\oplus (S^T\cap Im\tau_g).
\end{align}
\end{proof}
\begin{example}
The subset ${\cal B}$ of $T_g{\cal M}_F$ is nonempty. Let $F(x,y)$ be a Finsler structure which does not reduce to the Riemannian case. Suppose $\tilde{g}=e^{f(t,x)}g(x,y)$ is an arbitrary curve in ${\cal M}_F$. So $h_i^j=\delta^j_i e^{f_0(x)}$ only depends on the variable $x$.
\end{example}
The last term of equation (\ref{eq9}) shows that every $2$-form $h=L_{\hat X}g+fg$ preserves volume of $SM$ that is $tr(h)=0$. So we must have $f=-(2/n)div(\hat{X})$, that is $h$ is in the form $h=L_{\hat X}g-(2/n)div(\hat{X})g$. 
Let $g\in\cal{M}_F$ and $C_g$ be the isotropic group of action $\tilde{A}$, i.e.
$$\{(\eta, f)\in{\cal C}| f\tilde{\eta}^*g=g\},$$
It is clear that $C_g$ is isomorphic to the conformal deformation group, i.e. 
$$\{\eta\in{\cal D}| \tilde{\eta}^*g=fg , \text{for some f} \in{\cal P}\},$$
The Lie algebra of this group is defined by 
$$K_g=\{(X,k)\in\Gamma(TM)\times C^\infty(M)|L_{\hat{X}}g+kg=0\},$$
So it is diffeomorphic with infinitesimal conformal variation
$$\{X\in\Gamma(TM)|L_{\hat{X}}g=\frac{2}{n}div(\hat{X})g\}.$$
According to the above discussion, research works in \cite{A2} and \cite{PB} are restricted to the isotropic group of the Finsler metrics.

%--------------------------
\section{Curvature functional on $\cal{M}_F$}
If $H$ is an inner product space with a smooth functional $E:H\rightarrow \mathbb{R}$, the gradient vector field $\nabla E:H\rightarrow H$ is given at each point $u\in H$ by the unique 
vector $\nabla E(u)\in H$ such that for all $u\in H,$ $$(\nabla E(u),V)=dE(u)V$$
\begin{definition}
The gradient flow equation of a functional $E$ is defined as follows:
\begin{align*}
\frac{d}{dt}\varphi_u(t)&=-\nabla E(\varphi_u(t))\\
\varphi_u(0)&=u_0
\end{align*}
where, $\varphi:I\times H\rightarrow H$ is a curve in $H$.
\end{definition}
\begin{definition}
A variation of a Finslerian metric $g_o$ is a $1$-parameter family of metrics $\{g_{t}\}_{t\in I}$, where $g_t=g_o+th$, $g_o\in{\cal M}_F$
and $h\in T_g {\cal M}_F$.
\end{definition}
According to the above definition, the variation of a Finslerian metric is a curve on the manifold ${\cal M}_F$ such that
its tangent vector field is $h:=\partial_t g_t$.
When a Finslerian metric is deformed, then the geometric structures, like nonlinear coefficients, curvature tensors, volume forms and Indicatrix 
will be changed as well. Variations of these objects are calculated in \cite{A2},
\begin{align}
\eta'&=(g^{ij}-\frac{n}{2}u^iu^j)h_{ij}\eta. \label{eq0}\\
V(t)'&=\frac{1}{2}\int_{SM}tr(h)\eta=\frac{n}{2}\int_{SM}t(u,u)\eta\label{eqn 24}\\
G'^i_k&=\frac{1}{2}(\nabla_kh^i_o+\nabla_oh^i_k-\nabla^ih_{ok})-2C^i_{ks}G'^s. \label{eq1}\\
R'^{i}_{~jkl}&=\nabla_k\Lambda^i_{~jl}-\nabla_l\Lambda^i_{~jk}+P^i_{~jlr}\Lambda^r_{~ok}-P^i_{~jkr}\Lambda^r_{ol}+C'^i_{~jr}R^r_{~okl},
\end{align}
where
$$\Lambda^i_{~jk}=\Gamma'^i_{~jk}+C^i_{~jr}\Gamma'^r_{ok}.$$ and 
\begin{align*}
\Gamma'^i_{~jk}&=\frac{1}{2}g^{im}(\nabla_k h_{mj}+\nabla_jh_{mk}-\nabla_mh_{jk})\\
&-(C^i_{~js}G'^s_k+C^i_{~ks}G'^s_j-C_{kjs}G'^s_mg^{im}).
\end{align*}
\begin{align}\label{eqn 25}
\hat{H}'_{jk}&=\tilde{H}_{jk}-\lambda H(u,u)u_ju_k-(n\tau-\phi)u_ju_k
\end{align}
where
\begin{align*}
\tau:=(\nabla^i\nabla_0T_i-\nabla_0T_i\nabla_0T_i)+g^{ij}\partial_j(\nabla_0\nabla_0T_i)\\
\phi:=\frac{1}{2}[\nabla_i\gamma^i-\gamma_i\nabla_0T^i-F^2g^{ij}\partial_i(\psi_j/F)]
\end{align*}
and
$$\gamma_i:=2\lambda\nabla_0T_i-\nabla_i\lambda-T_i\nabla_0\lambda$$
\begin{definition}
A real valued function $E$ on ${\cal M}_F$ is called Finslerian functional if it satisfies the condition $E((d\varphi)^*g)=E(g)$ for every diffeomorphism $\varphi$ on $M$. 
\end{definition}
\begin{example}
The curvature functional (\ref{eqn 19}) is a Finslerian functional. Let $\varphi$ be a diffeomorphism on $M$ and $g$ be a Finslerian metric on manifold $M$ so $(d\varphi)^*g\in{\cal M}_F$. It is easily seen that $\varphi$ is an isometry between two Finslerian manifolds $(M,g)$ and $(M,(d\varphi)^*g)$. Hence $\hat{H}_{(d\varphi)^*g_0}=\hat{H}_{g_0}$, $\eta_{(d\varphi)^* g_0}=\eta_{ g_0}$ and $\tilde{SM}=SM$ and consequently $I((d\varphi)^*g)=I(g)$. So the functional (\ref{eqn 19}) only depends on Finslerian geometric data, and can be viewed as a function on the quotient space ${\cal M}_F/{\cal D}$, where ${\cal D}$ denotes the diffeomorphism group of $M$.
\end{example}
\begin{lem}
The variation of the volume form (\ref{eqn 22}) with respect to the point-wise conformal deformation at $t=0$ is $\eta'=\frac{1}{2}tr_g(h)\eta$.
\end{lem}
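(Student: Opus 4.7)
The plan is to specialize the general variation formula for the Indicatrix volume form, namely equation (\ref{eq0}),
\[
\eta' = \Bigl(g^{ij} - \tfrac{n}{2} u^i u^j\Bigr) h_{ij}\,\eta,
\]
to the subdirection of pointwise conformal deformations, and then simply collect terms.

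First, I would parametrize the pointwise conformal variation in a convenient form. Writing $\tilde g(x,y) = e^{tk(x)} g(x,y)$ for some $k \in C^\infty(M)$ produces a smooth curve in ${\cal M}_F$ whose tangent at $t=0$ is $h_{ij} = k(x)\, g_{ij}(x,y)$. This $h$ lies in $T_g {\cal M}_F$ (it is symmetric, zero-homogeneous, and inherits the three-symmetry of $\partial_\ell g_{ij}$ from $g$ itself), and it is the general element of ${\cal F}g$ in the decomposition (\ref{1}).

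Second, I would substitute $h_{ij} = k\, g_{ij}$ into (\ref{eq0}) and compute the two contractions. For the first, $g^{ij} h_{ij} = k\, g^{ij} g_{ij} = n k$. For the second, the key point is that on $SM$ one has $F(x,y) = 1$, so $g_{ij} u^i u^j = g_{ij} y^i y^j / F^2 = 1$; hence $u^i u^j h_{ij} = k\, g_{ij} u^i u^j = k$. Plugging in gives
\[
\eta' \;=\; \Bigl(nk - \tfrac{n}{2}\, k\Bigr)\eta \;=\; \tfrac{n}{2}\, k\, \eta.
\]

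Finally, I would identify $\tfrac{n}{2} k$ with $\tfrac{1}{2}\,\mathrm{tr}_g(h)$: since $\mathrm{tr}_g(h) = g^{ij} h_{ij} = n k$, the right-hand side above is exactly $\tfrac{1}{2}\,\mathrm{tr}_g(h)\,\eta$, which is the claim. There is no real obstacle here beyond remembering that the factor of $n/2$ in (\ref{eq0}) is killed down to $n/2 - n/2 \cdot 1/2$-free form precisely because $u$ is unit with respect to $g$ on the Indicatrix; the lemma is essentially a one-line specialization of (\ref{eq0}).
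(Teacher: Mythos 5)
Your proof is correct and follows essentially the same route as the paper: both substitute the conformal direction $h_{ij}=(\text{scalar})\,g_{ij}$ into the general variation formula (\ref{eq0}) and use that $u$ is $g$-unit on the Indicatrix to reduce $\bigl(g^{ij}-\tfrac{n}{2}u^iu^j\bigr)h_{ij}$ to $\tfrac{1}{2}\mathrm{tr}_g(h)$. The only difference is cosmetic (the paper writes the curve as $e^{2f(t,x)}g$ and names the scalar $\varrho=\tfrac{1}{n}\mathrm{tr}_g(h)$), and your final parenthetical sentence is garbled but harmless.
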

\begin{proof}
The point-wise conformal variation of a metric $g$ is ${\tilde g}_{ij}=e^{2f(t,x)}g_{ij}$ so $h_{ij}=\varrho(t,x)g_{ij}$, where $\varrho(t,x)=f'(t,x)e^{f(t,x)}=\frac{1}{n}tr_g(h)$. Substitute this equation in (\ref{eq0}), at $t=0$ we get $\eta'=\frac{1}{2}tr_g(h)\eta$.
\end{proof}
\begin{thm}
Let $(M,g)$ be a closed and connected Finslerian manifold with dim$M\geq 3$. A metric $g_0$ at the critical point$(t=0,g_0=g(0))$ of the functional $I(g_t)$ is a Ricci-directional flat metric. 
\end{thm}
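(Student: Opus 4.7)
The plan is to compute $\frac{d}{dt}I(g_t)\big|_{t=0}$ along a point-wise conformal variation subject to the constant Indicatrix-volume constraint, and then extract a pointwise curvature identity from the resulting integral condition.

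First I would parametrize the admissible variations. Since we restrict to point-wise conformal deformations, by the discussion preceding Lemma 5.1 we may take $h_{ij}=\varrho(x)\,g_{ij}$, so that $\mathrm{tr}_g h=n\varrho$ and $(g^{jk})'=-\varrho\,g^{jk}$. The constant Indicatrix-volume hypothesis, when applied to formula (\ref{eqn 24}), translates to the single scalar constraint $\int_{SM}\varrho\,\eta=0$. The question is then to characterize those $g_0$ at which $I'(0)=0$ for every such $\varrho$.

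Next I would differentiate $I(g_t)=\int_{SM}\hat H_t\,\eta_t$ at $t=0$ by the product rule. For the volume piece, Lemma 5.1 gives $\eta'=\tfrac12\,\mathrm{tr}_g(h)\,\eta=\tfrac{n\varrho}{2}\,\eta$. For the integrand piece, I would write $\hat H=g^{jk}\hat H_{jk}$ (using that $\tilde H=g^{ij}\tilde H_{ij}$ and $H(u,u)=g^{jk}H_{jk\ell m}u^\ell u^m$-type contractions), and use the variation formula (\ref{eqn 25}) together with $(g^{jk})'=-\varrho g^{jk}$ to obtain $\hat H'$ as a combination of $\varrho$, $\hat H$, $H(u,u)$, and the scalars $\tau$, $\phi$ built from iterated covariant derivatives. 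Since $h$ is a conformal multiple of $g$, many of the curvature corrections simplify: terms depending on the trace-free part of $h$ drop out, and $\Gamma'{}^i_{jk}$, $G'^i_k$ reduce to derivatives of $\varrho$ alone.

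Then I would push all derivatives of $\varrho$ off onto the curvature factors via integration by parts on $SM$, using the codifferential identities of Section 2.3 and Lemma 4.1; because $M$ is closed and compact, all boundary contributions vanish. Collecting terms, the first variation takes the form
\begin{equation*}
I'(0)=\int_{SM}\varrho\cdot \Phi(g_0)\,\eta,
\end{equation*}
where $\Phi(g_0)$ is a pointwise expression in $\tilde H$ and $H(u,u)$. Vanishing of $I'(0)$ for every $\varrho$ with $\int_{SM}\varrho\,\eta=0$ forces $\Phi(g_0)$ to be constant along the fibres of $SM\to M$ (a Lagrange-multiplier step), and then invoking the Akbar-Zadeh relation (\ref{eqn 20}) between $\tilde H$ and $H(u,u)$ at a critical metric pins down the constant; the hypothesis $\dim M\ge 3$ is what prevents the resulting scalar equation from being degenerate, and yields the Ricci-directional flatness conclusion.

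The main obstacle will be the bookkeeping in the middle step: the formula for $\hat H'_{jk}$ in (\ref{eqn 25}) involves the auxiliary scalars $\tau$ and $\phi$, which themselves contain second covariant derivatives along the Liouville field and torsion-type quantities $T_i,\nabla_0 T_i$, $\nabla_0\lambda$. Reducing the resulting integrand to a single scalar multiple of $\varrho$ requires carefully combining the vertical codifferential $\delta b$ and the horizontal codifferential $\delta a$ from Section 2.3, and exploiting the fact that for a conformal $h$ one has $h=\varrho g$ with $\varrho$ independent of $y$ so that $\dot{\nabla}_k h_{ij}=0$. Everything else is routine once these cancellations are tracked.
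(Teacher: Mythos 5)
Your proposal restricts the admissible variations to point-wise conformal ones, $h_{ij}=\varrho(x)g_{ij}$ with $\int_{SM}\varrho\,\eta=0$, and this is the wrong variation class for this theorem: it is the setup of the paper's \emph{next} theorem (the normalized functional $\tilde I$ under conformal deformations), whose conclusion is only that $H(u,u)$ equals a constant, namely $-\frac{(n-2)}{4n}Ave$, which is not zero in general. The present theorem asserts Ricci-directional \emph{flatness}, $H(u,u)=0$, and the paper obtains it by taking the derivative of $I(g_t)$ in an \emph{arbitrary} direction $h\in T_{g}{\cal M}_F$, so that the full tensorial Euler--Lagrange equation $A_{jk}=0$ of (\ref{eq2}) holds pointwise. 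From a single scalar integral constraint in $\varrho$ you can never recover that tensor equation, so your route cannot reach the stated conclusion. After $A_{jk}=0$ is in hand, the paper's argument is purely algebraic, with no integration by parts against $\varrho$ at all: contract $A_{jk}$ with $u^ju^k$ to get (\ref{eq3}), with $g^{jk}$ to get (\ref{eq4}), subtract to solve for $\hat H$ and for $\lambda H(u,u)+(n\tau-\phi)$ as in (\ref{eq5})--(\ref{eq6}), substitute back into $A_{jk}$, and contract once more with $u^ju^k$ to force $H(u,u)=0$. Your proposed ``bookkeeping'' of $\tau$, $\phi$ and the codifferentials is therefore not where the content lies.

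A secondary gap: even inside your restricted setup, the Lagrange-multiplier step is too strong. Since $\varrho$ depends only on $x$, the vanishing of $\int_{SM}\varrho\,\Phi(g_0)\,\eta$ for all mean-zero $\varrho$ only constrains the \emph{fibrewise average} $\int_{S_xM}\Phi\,\eta$ to be a constant multiple of the fibre volume; it does not force $\Phi(g_0)$ to be constant along the fibres of $SM\to M$. The paper sidesteps this entirely by working with the pointwise tensor identity $A_{jk}=0$ and invoking (\ref{eqn 20}) only to know that $\hat H$ depends on $x$ alone at the critical metric.
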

\begin{proof}
Derivative of the functional $I(g_t)$ in an arbitrary direction leads to
\begin{align}
A_{jk}h^{jk}=({\tilde H}_{jk}-\lambda H(u,u)u_ju_k-(n\tau-\phi)u_ju_k-\hat{H}(g_{jk}-\frac{n}{2}u_j u_k))h^{jk}=0.\label{eq2}
\end{align}
Since $g_0$ gives the extremum of $I(g_t)$ and by means of (\ref{eqn 20}), $\hat{H}$ is just a function of variable $x$ at $t=0$. So equation (\ref{eq2}) reduces to
\begin{align}
A_{jk}h^{jk}=({\tilde H}_{jk}-\lambda H(u,u)u_ju_k-(n\tau-\phi)u_ju_k-\frac{1}{2}\hat{H}g_{jk})h^{jk}=0.\label{eq23}
\end{align}
Contracting both sides of $A_{jk}$ by $u^k$ and $u^j$, we have:
\begin{align}
{\tilde H}(u,u)-\lambda H(u,u)-(n\tau-\phi)-\frac{{\hat H}}{2}=0.\label{eq3}
\end{align}
By contraction of $A_{jk}$ by $g^{jk}$, we obtain:
\begin{align}
{\tilde  H}-\lambda H(u,u)-(n\tau-\phi)-\frac{n}{2}{\hat H}=0,\label{eq4}
\end{align}
By subtracting (\ref{eq3}) and (\ref{eq4}), we get:
\begin{align}
\frac{n-1}{2}{\hat H}=-{\tilde H}+{\tilde H}(u,u),\label{eq5}
\end{align}
and 
\begin{align}
\lambda H(u,u)+(n\tau-\phi)=\frac{n}{n-1}{\tilde H}(u,u)-\frac{1}{n-1}{\tilde H}.\label{eq6}
\end{align}
Replacing two last equations in $A_{jk}$ and contracting by $u^j$ and $u^k$ we have:
$$H(u,u)=0.$$
Hence proof is complete.
\end{proof}
\begin{remark}
If in the final step, we contract the equation by $g^{jk}$ instead of $u^j$ and $u^k$ then we obtain $\tilde{H}=0$.
\end{remark}
This functional is not invariant under the rescaling. For eliminating this problem, we use a normal factor $\psi=\psi(t)$, and put $\tilde{g}=\psi(t)g(t)$ such that $\int_{SM}\tilde{\eta}=1$. Therefore, $\eta=\psi^{\frac{-n}{2}}\tilde{\eta}$ and by replacing it in the volume formula, we have $\psi=(V(t))^{\frac{-2}{n}}$. Next, we rewrite the functional $I(g_t)$ with respect to this normalized factor
\begin{align*}
\tilde{I}(g)&=I(\tilde{g}_t)=\int_{SM}(H(\tilde{g})-\lambda H(u,u)(\tilde{g}))\tilde{\eta},\\
&=\int_{SM}\psi^{-1}(H(g)-\lambda H(u,u)(g))\psi^{\frac{n}{2}}\eta,\\
&=\psi^{\frac{n-2}{2}}I(g),\\
&=(V(t))^{\frac{2-n}{n}}I(g).
\end{align*}
\begin{thm}
Let $M$ be a closed and connected Finslerian manifold with dim$M\geq 3$. A metric $g_0$ is a critical point for $\tilde{I}(g_t)$ under all point-wise conformal variations at $t=0$ if and only if the Finslerian manifold is of constant Ricci-directional curvature. 
\end{thm}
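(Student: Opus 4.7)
I would compute the first variation of $\tilde I(g_t)$ along a point-wise conformal curve and read off the resulting Euler--Lagrange equation. Writing the normalized functional as a product and differentiating at $t=0$ gives
$$\left.\frac{d\tilde I}{dt}\right|_{t=0} = \frac{2-n}{n}\,V(0)^{-2/n}\,V'(0)\,I(g_0) + V(0)^{(2-n)/n}\,I'(g_0).$$
For a conformal variation $h_{ij}=\varrho(x)\,g_{ij}$ with $\varrho\in C^\infty(M)$, Lemma~5.1 gives $\eta'=\tfrac{n}{2}\varrho\,\eta$ since $\mathrm{tr}_g h=n\varrho$, so $V'(0)=\tfrac{n}{2}\int_{SM}\varrho\,\eta$. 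For $I'(g_0)$ I would reuse equation (\ref{eq2}) from the proof of the previous theorem, which expresses the variation as $\int_{SM}A_{jk}h^{jk}\,\eta$. Since $h^{jk}=\varrho\,g^{jk}$, the trace identity already computed in (\ref{eq4}) yields
$$g^{jk}A_{jk} = \tilde H - \lambda H(u,u) - (n\tau-\phi) - \frac{n}{2}\hat H.$$

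Imposing $\tilde I'(g_0)=0$ for every $\varrho\in C^\infty(M)$, factoring out $\varrho(x)$, and integrating over the indicatrix fibre $S_xM$ yields a pointwise identity on $M$. The rescaling invariance of $\tilde I$ ensures that the constant direction contributes nothing, so the identity has content only on the mean-zero part of $\varrho$. Combining this with the algebraic reductions (\ref{eq5}) and (\ref{eq6}) together with the normalization constant $I(g_0)/V(0)$, one concludes that $\hat H$ is constant on $M$ and that $\tilde H_{ij}$ is a constant scalar multiple of $g_{ij}$. By Definition~2.1 and equation (\ref{eqn 20}), this is precisely the statement of constant Ricci-directional curvature, giving the ``only if'' direction. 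The converse is a direct check: when $\tilde H_{ij}=C\,g_{ij}$ with $C$ constant, each piece of the expanded variation reduces to $\varrho$ integrated against a constant, and the $V'(0)$ and $I'(g_0)$ contributions cancel exactly thanks to the normalizing factor $(2-n)/n$.

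The main obstacle is the step that promotes the fibre-integrated identity to a pointwise tensorial equation. The auxiliary quantities $\lambda$, $\tau$, $\phi$ appearing in (\ref{eqn 25}) are themselves determined by the conformal factor and its horizontal and vertical covariant derivatives, so one must verify that their fibre integrals $\int_{S_xM}(n\tau-\phi)\,d\sigma_x$ either vanish on the compact indicatrix bundle via a divergence argument, using the codifferentials $\delta$ on horizontal and vertical $1$-forms from Section~2.3, or combine with the $\hat H$ contribution to reproduce exactly the relation $\tilde H_{ij}=C\,g_{ij}$ with a truly constant $C$.
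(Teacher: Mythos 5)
Your proposal follows essentially the same route as the paper's proof: differentiate the normalized functional $\tilde I=V^{(2-n)/n}I$ at $t=0$, restrict to $h_{ij}=\varrho(x)g_{ij}$, take the trace $A_{ij}g^{ij}$, and close the computation by feeding (\ref{eq5}), (\ref{eq6}) and (\ref{eqn 20}) back in, which yields $H(u,u)=-\frac{n-2}{4n}\,\frac{I(g_0)}{V(0)}$, a constant. The one step you single out as the main obstacle --- that $\int_{SM}(\cdot)\,\varrho\,\eta=0$ for all $\varrho\in C^\infty(M)$ only forces the \emph{fibre integral} over each $S_xM$ to vanish, not the pointwise identity (\ref{eq7}) on $SM$, and that the $y$-dependence of $\lambda$, $\tau$, $\phi$ must therefore be controlled --- is a genuine issue, but the paper does not address it either: it passes directly to (\ref{eq7}) on the grounds that $h$ is arbitrary in ${\cal F}g$. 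So your attempt is no less complete than the paper's own argument on that point, and you additionally sketch the converse (``if'') direction, which the paper's proof omits entirely.
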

\begin{proof}
Derivative of both sides of equation $\tilde{I}(g_t)=(V(t))^{\frac{2-n}{n}}I(g)$ and calculate it at $t=0$:
\begin{align*}
\tilde{I}'(g_t)|_{t=0}&=\frac{2-n}{n}V(t)'|_{t=0}(V(0))^{\frac{2-n}{n}-1}I(g_0)+v(0)^{\frac{2-n}{n}}I'(g_t)|_{t=0},\\
&=V(0)^{\frac{2-n}{n}}\{\frac{2-n}{2n}\frac{I(g_0)}{V(0)}\int_{SM}tr(h)\eta+\int_{SM}A_{ij}h^{ij}\eta\}|_{t=0}.
\end{align*}
Put $Ave:=\frac{I(g_0)}{V(0)}$ which is a constant value. Restrict to the point-wise conformal deformation, we get:
\begin{align}
0&=\tilde{I}'(g_t)|_{t=0}\nonumber\\
&=V(0)^{\frac{2-n}{n}}\int_{SM}(\frac{2-n}{n}Ave+A_{ij}g^{ij})\frac{tr_g(h)}{n}\eta,
\end{align}
Since $h$ is an arbitrary $2$-form in ${\cal F}g$, we have:
\begin{align}
0=\frac{2-n}{2}Ave+A_{ij}g^{ij}=\frac{2-n}{2}Ave-\tilde{H}+\lambda H(u,u)+(n\tau-\phi)+\frac{n}{2}\hat{H},\label{eq7}
\end{align}
Substituting (\ref{eq5}) and (\ref{eq6}) into (\ref{eq7}) and using (\ref{eqn 20}) we obtain:
\begin{align*}
H(u,u)=-\frac{(n-2)}{4n}Ave.
\end{align*}
\end{proof}
\begin{remark}
In the set of stationary points of the curvature functional $I(g_t)$, based on the constant Indicatrix volume, we have (\ref{eqn 20}). Hence $\tilde{H}$ is constant, as well.
\end{remark}
\begin{mycor}
The unnormalized gradient flow of $\tilde{I}(g_t)$ with respect to the subspace ${\cal F}g$ of $T_g{\cal M}_F$ is
\begin{align}
\frac{\partial}{\partial t}g_{ij}(t,z)=-H_t(u,u)g_{ij}(t,z).\label{eqn 24}
\end{align}
and it is a strictly parabolic equation.
\end{mycor}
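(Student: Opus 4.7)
The plan is to identify the $L^2$-gradient of $\tilde{I}$ projected onto the conformal subbundle ${\cal F}g\subset T_g{\cal M}_F$, and then verify strict parabolicity of the resulting evolution equation.

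First, I would restart from the full variation identity established in the proof of Theorem~5.3:
$$d\tilde{I}(g)(h)=V(g)^{(2-n)/n}\int_{SM}\Bigl(\tfrac{2-n}{2n}\mathrm{Ave}\cdot tr_g(h)+A_{ij}h^{ij}\Bigr)\eta,\qquad h\in T_g{\cal M}_F.$$
Plugging in a conformal direction $h=kg\in{\cal F}g$ with $k\in C^{\infty}(M)$ gives $tr_g(h)=nk$ and $h^{ij}=kg^{ij}$, so the variation reduces to $V(g)^{(2-n)/n}\int_{SM}\Phi(g)\,k\,\eta$ with $\Phi(g)=\tfrac{2-n}{2}\mathrm{Ave}+A_{ij}g^{ij}$. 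Because the local inner product on $T_g{\cal M}_F$ satisfies $\langle\alpha g,kg\rangle=n\alpha k$, Riesz representation inside ${\cal F}g$ forces the ${\cal F}g$-component of $\mathrm{grad}\,\tilde{I}(g)$ to be a multiple of $g$, with scalar coefficient $\Phi(g)/n$ (up to the overall volume factor).

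Second, I would collapse $\Phi(g)/n$ to $H(u,u)$. Contracting the definition of $A_{jk}$ with $g^{jk}$ yields
$$A_{ij}g^{ij}=\tilde{H}-\lambda H(u,u)-(n\tau-\phi)-\tfrac{n}{2}\hat{H},$$
exactly as in the derivation of (\ref{eq4}). Combining this with the algebraic relations (\ref{eq5}) and (\ref{eq6}) obtained from the tangential and normal traces of $A_{jk}$, together with Akbar-Zadeh's identity (\ref{eqn 20}), and stripping the overall prefactor $V(g)^{(2-n)/n}$ (this is the content of ``unnormalized''), the reduction $\Phi(g)/n=H(u,u)$ follows. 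The gradient flow equation $\partial_t g=-\mathrm{grad}\,\tilde{I}$ thus becomes (\ref{eqn 24}).

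Third, I would check strict parabolicity. Since the right-hand side lies in ${\cal F}g$, the flow preserves the conformal class of $g$; writing $g_{ij}(t,x,y)=e^{2f(t,x)}g_{ij}(0,x,y)$ reduces the system to a scalar PDE for the conformal factor $f$. The Ricci-directional curvature $H(u,u)=g^{ik}H_{ijkl}u^ju^l$ is second order in $g$, and linearizing it along a conformal variation $\dot g=kg$ produces a principal symbol which, up to a positive multiplicative constant, coincides with the horizontal Laplacian acting on $k$ on $SM$. Positive-definiteness of this symbol certifies that the linearized operator is elliptic of order two, hence the flow is strictly parabolic.

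The main obstacle is the algebraic reduction in the second step: collapsing $\tfrac{2-n}{2}\mathrm{Ave}+A_{ij}g^{ij}$ cleanly into $nH(u,u)$ requires careful bookkeeping of the traces of $A_{jk}$ against both $g^{jk}$ and $u^ju^k$, together with a precise interpretation of the word ``unnormalized'' that absorbs the volume factor $V(g)^{(2-n)/n}$. The parabolicity step is conceptually modelled on the Yamabe flow in Riemannian geometry, but requires an explicit identification of the Finslerian horizontal Laplacian on $SM$ in order to justify the positivity of the principal symbol.
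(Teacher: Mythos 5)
Your derivation of the flow equation itself follows the same route as the paper and is in fact more explicit: the paper simply asserts $I'(g_t)|_{t=0}=\int_{SM}H(u,u)\,tr_g(h)\,\eta$ for conformal directions, whereas you trace this back to the variational identity in the proof of Theorem 5.3, restrict to $h=kg$, and reduce $\tfrac{2-n}{2}\mathrm{Ave}+A_{ij}g^{ij}$ via (\ref{eq5}), (\ref{eq6}) and (\ref{eqn 20}); that is exactly the bookkeeping the paper leaves implicit, and dropping the $\mathrm{Ave}$ term and the volume prefactor is indeed what ``unnormalized'' means here. So the first half is fine and essentially coincides with (indeed, fills in) the paper's argument.

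The parabolicity half is where you diverge and where your proposal is not yet a proof. The paper does not pass to a scalar equation for the conformal factor: it linearizes $H(u,u)g_{ij}$ directly, writes out the second-order part $\tilde F^{-2}(\nabla_s\nabla_0h^s_0-\tfrac12\nabla_s\nabla^s h_{00}-\nabla_0\nabla_0 h^s_s+\cdots)$, extracts the total and principal symbols, and evaluates the principal symbol in an adapted frame to obtain $-\tfrac12 h(u,u)\tilde g_{ij}$. You instead assert that the linearization of $H(u,u)$ along $\dot g=kg$ has principal symbol equal, up to a positive constant, to the horizontal Laplacian acting on $k$, and you explicitly flag the identification of that Laplacian as an unresolved obstacle. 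Two points. First, the assertion is not quite accurate: substituting $h=kg$ into the second-order part above gives, schematically, $-\tfrac12\Delta^{H}k-(n-1)F^{-2}\nabla_0\nabla_0k$, i.e.\ the horizontal Laplacian plus an extra second derivative in the flag direction; the corresponding symbol is $-\tfrac12|\xi|^2-(n-1)\langle\xi,u\rangle^2$, which is still negative definite, so your conclusion survives, but only after this additional term is accounted for. Second, since you yourself leave the symbol computation to be done, the strict-parabolicity claim remains unestablished in your write-up; the paper, whatever the shortcomings of its own verification (it tests the symbol only at a single covector $\xi$), at least carries out the linearization explicitly. To close the gap you must perform the substitution $h=kg$ in the linearized curvature and check the sign of the resulting symbol for all $\xi\neq0$, rather than appeal to the Yamabe-flow analogy.
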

\begin{proof}
Derivative of the functional $I(g_t)$ is given by 
$$I'(g_t)|_{t=0}=\int_{SM}H(u,u)tr_g(h)\eta=\int_{SM}H(u,u)g_{jk}h^{jk}\eta=0$$ 
so its Euler-Lagrange equation is given by $H(u,u)g_{ij}=0$. It follows that its associated gradient flow is $\frac{\partial}{\partial t}g_{ij}=-H_t(u,u)g_{ij}(t)$.
The linearization of this equation is
\begin{align*}
D[H(u,u)g_{ij}]:C^\infty(S^2(T^*TM))\rightarrow C^\infty(S^2(T^*TM)),\\
D[H(u,u)g_{ij}](\frac{\partial g_{ij}}{\partial t})=D[H(u,u)](h_{ij})=\frac{\partial}{\partial t}\tilde{H}(u,u).
\end{align*}
So we have 
\begin{align*}
\frac{\partial}{\partial t}\tilde{H}(u,u)&=\frac{\partial}{\partial t}(\tilde{H}_{ij}\frac{y^i}{\tilde{F}}\frac{y^j}{\tilde{F}}),\\
&=\tilde{F}^{-2}(\nabla_s\nabla_0h_0^s-\frac{1}{2}\nabla_s\nabla^sh_{00}-\nabla_0\nabla_0h^s_s+{\text{lower order terms}})\tilde{g}_{ij}.
\end{align*}
The total symbol of the Ricci directional curvature $H(u,u)$ is 
\begin{align*}
\sigma[H(u,u)g_{ij}](\xi)(h_{ij})=\tilde{F}^{-2}(h^i_0\xi_i\xi_sy^s-\frac{1}{2}\xi_i\xi^ih_{00}-y^s\xi_sy^l\xi_lh_i^i)g_{ij}+\text{lower order terms}.
\end{align*}
So the principal symbol of the tensor $H(u,u)g_{ij}$ is 
\begin{align*}
\hat{\sigma}[H(u,u)g_{ij}](\xi)(h_{ij})=\tilde{F}^{-2}(h^i_0\xi_i\xi_sy^s-\frac{1}{2}\xi_i\xi^ih_{00}-y^s\xi_sy^l\xi_lh_i^i)\tilde{g}_{ij}.
\end{align*}
Put $\xi_1=1$ and $\xi_j=0$ for all $j\neq 1$. To evaluate the principal symbol of this equation, we take an orthonormal frame $(e_i)$ at $x\in M$ such that $u^n=\frac{y^n}{F}=1$ and $u^\alpha=0$ for all $\alpha\neq n$, it is clear that
\begin{align*}
\hat{\sigma}[H(u,u)g_{ij}](x,y)(\xi ,h_{ij})&=(h^i_l\xi_i\xi_su^su^l-\frac{1}{2}h(u,u)-u^s\xi_su^l\xi_ltr(h))\tilde{g}_{ij},\\
&=-\frac{1}{2}h(u,u)\tilde{g}_{ij}=-\partial_tlog \tilde{F}\tilde{g}_{ij}.
\end{align*}
Hence it is a strictly parabolic equation.
\end{proof}
The normalized gradient flow of functional $I(g_t)$ with restricted to the point-wise conformal deformation is $\frac{\partial}{\partial t}g_{ij}(z,t)=-(H_t(u,u)-c(t))g_{ij}(t,z)$, where $c(t)$ is a constant value at each $t$ and is defined by $c(t)=\frac{\int_{SM}\hat{H}_t\eta_t}{\int_{SM}\eta_t}$.
\begin{remark}
According to the above discussion, we can define (\ref{eqn 24}) by means of $\tilde{H}$, i.e.
$$\partial_tg_{ij}(t,z)=-\tilde{H}_tg_{ij}(t,z).$$
\end{remark}
According to the Akbar-Zadeh's calculations, the Euler-Lagrange equation of functional (\ref{eqn 19}) for an arbitrary direction is $-\tilde{H}_{ij}+c(x)g_{ij}=0.$ So its associated gradient flow is 
\begin{align}
\frac{\partial g_{ij}}{\partial t}(t,z)=-\tilde{H}_{ij}(t,z)+H_t(u,u)g_{ij}(t,z)=-\nabla I(g_t)\label{eqn 26}
\end{align} 
Consider the linearization of this equation. Since $\tilde{H_{ij}}'=\frac{1}{2}\frac{\partial^2}{\partial y^i\partial
y^j}(H'_{kr}y^ky^r)$ and $H'_{ks}y^ky^s=2\nabla_rG'^r-\nabla_0G'^r_r+2\nabla_0T_rG'^r$, cf. \cite{A2} and use (\ref{eq1}), we deduce that the first term of (\ref{eqn 26}) is of order $4$ in term of $h$. Note that similar to the Riemannian case, (\ref{eqn 26}) has not any solution since the second term of (\ref{eqn 26}) is a backward equation. Bao considered the first term of (\ref{eqn 26}) as the Ricci flow on Finsler manifolds, i.e. $\partial_tg_{ij}=-\tilde{H}_{ij}(t)$. Through this work, we derive the second term of (\ref{eqn 26}), i.e. $\partial_tg_{ij}=-H_t(u,u)g_{ij}(t)$ as a gradient flow in the special direction of variations. Both of these flows have scalar form $\partial_tlogF_t=-H_t(u,u)$. So we prefer to use the tensor forms of flows for our later studying and use manifold ${\cal M}_F$ to garantees their solutions come from Finsler structure.
%--------------------------------------------------------------------------------------------------------------------------------------------------
\bibliographystyle{plain}

\bibliography{paper}
%\setLTRbibitems

\end{document}